\begin{document}

\newtheorem{theorem}{Theorem}[section]
\newtheorem{lemma}[theorem]{Lemma}
\newtheorem{corollary}[theorem]{Corollary}
\newtheorem{fact}[theorem]{Fact}
\newtheorem{proposition}[theorem]{Proposition}
\newtheorem{claim}[theorem]{Claim}
\theoremstyle{definition}
\newtheorem{example}[theorem]{Example}
\newtheorem{remark}[theorem]{Remark}
\newtheorem{definition}[theorem]{Definition}
\newtheorem{question}[theorem]{Question}

\def\aut{\operatorname{Aut}}
\def\id{\operatorname{id}}
\def\PP{\mathbb{P}}
\def\P{{\bf P}}
\def\GG{\mathbb{G}}
\def\cb{\overline{\operatorname{Cb}}}
\def\tp{\operatorname{tp}}
\def\stp{\operatorname{stp}}
\def\acl{\operatorname{acl}}
\def\dcl{\operatorname{dcl}}
\def\eq{\operatorname{eq}}
\def\th{\operatorname{Th}}
\def\locus{\operatorname{loc}}
\def\dom{\operatorname{dom}}
\def\ccm{\operatorname{CCM}}


\def\Ind#1#2{#1\setbox0=\hbox{$#1x$}\kern\wd0\hbox to 0pt{\hss$#1\mid$\hss}
\lower.9\ht0\hbox to 0pt{\hss$#1\smile$\hss}\kern\wd0}
\def\ind{\mathop{\mathpalette\Ind{}}}
\def\Notind#1#2{#1\setbox0=\hbox{$#1x$}\kern\wd0\hbox to 0pt{\mathchardef
\nn=12854\hss$#1\nn$\kern1.4\wd0\hss}\hbox to
0pt{\hss$#1\mid$\hss}\lower.9\ht0 \hbox to
0pt{\hss$#1\smile$\hss}\kern\wd0}
\def\nind{\mathop{\mathpalette\Notind{}}}

\title[A Model-theoretic counterpart to {M}oishezon morphisms]{A Model-theoretic counterpart to\\ {M}oishezon morphisms}
\author{Rahim Moosa}
\thanks{Rahim Moosa was supported by an NSERC Discovery Grant.}
\address{Department of Pure Mathematics\\
University of Waterloo\\
Canada N2L 3G1}
\email{rmoosa@math.uwaterloo.ca}

\date{April 26, 2010}

\subjclass[2000]{Primary 03C45, 03C98. Secondary 32J27}

\begin{abstract}
The notion of being {\em Moishezon} to a set of types, a natural strengthening of internality motivated by complex geometry, is introduced.
Under the hypothesis of Pillay's~\cite{pillay01} canonical base property, and using results of Chatzidakis~\cite{chatzidakis10}, it is shown that if a stationary type of finite $U$-rank at least two is almost internal to a nonmodular minimal type and {\em admits a diagonal section}, then it is Moishezon to the set of nonmodular minimal types.
This result is inspired by Campana's~\cite{campana81} ``first algebraicity criterion'' in complex geometry.
Other related abstractions from complex geometry, including {\em coreductions} and {\em generating fibrations} are also discussed.
\end{abstract}

\maketitle

\section{Introduction}

\noindent
Compact complex manifolds can be viewed as first order structures in the language where there are predicates for all complex-analytic relations.
The corresponding theory admits quantifier elimination and is of finite Morley rank.
Many of the techniques and approaches used by complex geometers in studying the bimeromorphic structure of compact complex manifolds can be viewed as specialisations of the methods of finite rank geometric stability theory.
But this paper has to do with the other direction;
abstracting ideas {\em from} bimeromorphic geometry {\em to} stability theories.
The goal is to develop new model-theoretic tools that may be useful in the study of other contexts, such as differential-algebraic geometry.

The general model-theoretic theme that we are interested in is the following:
Given a finite rank stationary type $p(x)$  in a stable theory $T$, how does $p$ relate to the nonmodular minimal types of $T$?
If the Zilber trichotomy holds in $T$  (as it does in the theory of compact complex manifolds) then a nonmodular minimal type corresponds to algebraic geometry over an algebraically closed field, and so one is trying to quantify how much of an expansion of algebraic geometry the type $p(x)$ involves.
In the different theories the expansion from algebraic geometry is in different directions:
the theory of compact complex manifolds adds meromorphic structure while the theory of differentially closed fields in characteristic zero, or separably closed fields in positive characteristic, adds (Hasse-Schmidt) differential-algebraic structure.

The notions appearing in this paper were originally developed to prove, under suitable hypotheses on the theory, that every finite rank type has a minimum extension that is internal to the set of nonmodular minimal types.
The difference in rank then, between the type and this minimum extension, would be a new way to quantify how far the type is from the ``algebraic" part of that theory.
In complex geometry such minimum extensions, called {\em algebraic coreductions}, have been known to exist since the early nineteen eighties (due to Camapana~\cite{campana81}).
In its first form this paper was devoted to developing the model-theoretic machinery that would allow the complex-geometric constructions to go through in a wide class of stable theories.
As it turned out however, around the same time, Chatzidakis~\cite{chatzidakis10} found a much more direct proof of this theorem using milder (but similar) assumptions on the theory.
Nevertheless, we feel that some of the ideas and arguments that were extracted and abstracted from complex geometry may be of independent interest and may have other applications.
We therefore present them here.

The paper is laid out as follows:
In section~\ref{section-moish} we introduce a strengthening of internality modelled after the complex-geometric notion of a Moishezon morphism.
In section~\ref{section-crit}, under the assumption that the theory admits the canonical base property (a stability-theoretic condition that is true of compact complex manifolds~\cite{pillay01} and differentially closed fields~\cite{pillayziegler03}),  we give a criterion for when a finite rank type is Moishezon to the set of nonmodular minimals.
We rely heavily here on results of Chatzidakis~\cite{chatzidakis10}, without which we would have had to assume an apparently stronger (but {\em a posteriori} equivalent) property introduced by the author and Pillay in~\cite{cbp}.
Much of the work in this section involves pushing the techniques of~\cite{cbp} as far as they will go.
In any case, our criterion can be viewed as a model-theoretic counterpart of a very special case of Campana's ``first algebraicity criterion" from~\cite{campana81}.
Finally, in section~\ref{section-cored}, we discuss the connection to the existence of coreductions, a problem which we also reformulate in terms of the {\em generating families} that appeared in~\cite{cbp}.

\smallskip

We work throughout in $\overline{M}^{\eq}$ where $\overline M$ is a sufficiently saturated model of a complete stable theory $T$.
All parameter sets are assumed to be of cardinality less than the cardinality of $\overline M$, and
all tuples $a, b,\dots$ will be assumed to be possibly infinite tuples of length strictly less than the cardinality of $\overline M$.
We will only be concerned with precision up to interalgebraicity.
In particular, following~\cite{cbp}, by the {\em canonical base} of a stationary type $p(x)\in S(A)$, which we will denote by $\cb(p)$,
we will mean (an enumeration of) the {\em algebraic} closure of $A_0$, where $A_0$ is the smallest definably closed subset of $\dcl(A)$ over which $p$ does not fork and the restriction of $p$ to which is stationary.

\section{Moishezon: between internal and algebraic}
\label{section-moish}
\noindent
Suppose $\bf P$ is a set of partial types and $p(x)\in S(B)$ is a stationary type.
Here, and throughout this paper, for any set $C\supseteq B$, $p(x)|C$ denotes the nonforking extension of $p$ to $C$ and ${\bf P}_C$ denotes the set of partial types in $\bf P$ whose domains are contained in $C$.
We also use ${\bf P}_C^{\overline M}$ to denote the set of solutions $\displaystyle\bigcup_{q\in{\bf P}_C}q^{\overline M}$.

We will say that $p(x)\in S(B)$ is {\em $\P$-algebraic} if $p^{\overline M}\subseteq \acl\left(B{\bf P}_B^{\overline M}\right)$.
Recall that a stationary type $p(x)\in S(B)$ is {\em almost $\bf P$-internal}
if there is
some $C\supseteq B$ such that $p(x)|C$ is $\P$-algebraic.
We wish to introduce a condition that lies strictly in between $\P$-algebraicity and almost $\P$-internality.
To motivate our definition we turn to the model theory of compact complex manifolds.

Let $\mathcal A$ be the many-sorted first order structure where there is a sort for every reduced and irreducible complex-analytic space and where the basic predicates are the complex-analytic subsets of cartesian of sorts.
The theory of this structure, denoted by $\operatorname{CCM}$, admits quantifier elimimination and is, sort by sort, of finite Morely rank.
A distinguished sort in $\mathcal A$ is the projective line $\mathbb P$.
The complex field is definable on this sort, and in fact the induced structure on $\mathbb P$ is bi-interpretable with the pure field structure on the complex numbers.

Working in a saturated elementary extension of $\mathcal A$, let us consider what it means for a stationary type $p(x)=\stp(a/b)$ to be $\mathbb P$-algebraic or almost $\mathbb P$-internal.
(Here $a$ and $b$ are assumed to be finite tuples.)
If we let $X=\locus(a,b)$ and $Y=\locus(b)$ -- where here $\locus$ denotes the {\em locus}, that is, the smallest complex-analytic set containing the tuple in question -- then, as is explained in section~2 of~\cite{ret}, $p$ is the generic type of a generic fibre of the fibration $X\to Y$ induced by the projection map.
It follows from results in that paper (see also the discussion in section~3 of~\cite{cbp}) that
\begin{itemize}
\item
$p$ is $\mathbb P$-algebraic if and only if $X$ meromorphically embeds into $Y\times\mathbb P_n$ over $Y$, for some $n\geq 0$, and
\item
$p$ is almost $\mathbb P$-internal if and only if $p$ is $\mathbb P$-internal if and only if there is a complex-analytic space over $Y$,  $Y'\to Y$, such that the fibred product $X\times_YY'$ meromorphically embeds into $Y'\times\mathbb P_n$ over $Y'$, for some $n\geq 0$.
\end{itemize}
But in complex geometry there is an intermediate notion that plays a much more important role than either of the above two conditions:
a surjective morphism $X\to Y$ is {\em Moishezon} if $X$ meromorphically embeds into $\mathbb P(\mathcal F)$ over $Y$, where $\mathcal F$ is a coherent analytic sheaf on $Y$ and $\mathbb P(\mathcal F)$ denotes the projective linear space associated to $\mathcal F$.
If $p$ is $\mathbb P$-algebraic then $X\to Y$ is seen to be Moishezon by taking $\mathcal F$ to be the free sheaf $\mathcal O_Y^n$.
The converse fails because not every projective linear space splits.
On the other hand, as every projective linear space does split after base change, Moishezon-ness of $X\to Y$ implies that $p$ is almost $\mathbb P$-internal.
The converse of this implication also fails: the results of~\cite{ret} show that if $\dim(a/b)=1$ then $p$ is $\mathbb P$-internal, but there exist non-Moishezon fibrations of dimension one, for example of Hopf surfaces.
Hence Moishezon-ness of $X\to Y$ lies strictly in between $\mathbb P$-algebraicity and almost $\mathbb P$-internality of $p$.

Since we do not as yet see how to formulate the abstract analogue of a projective linear space, we focus instead on the following properties of Moishezon fibrations:
\begin{itemize}
\item[(1)]
The restriction of a Moishezon morphism to a complex-analytic subspace is again Moishezon.
\item[(2)]
The composition of Moishezon morphisms is Moishezon.
\end{itemize}
The following definition in the general stable setting extracts, almost literally, the model-theoretic content of these properties.

\begin{definition}
\label{moishezon}
Suppose $p(x)\in S(Ab)$ is a stationary type and $\P$ is an $\aut_A(\overline M)$-invariant set of partial types.
We will say that $p(x)$ is {\em $\bf P$-Moishezon over $A$} if whenever $a\models p(x)$ and $c$ is such that $\stp(b/Ac)$ is almost $\bf P$-internal, then $\stp(a/Ac)$ is almost $\P$-internal.
\end{definition}

\begin{remark}
\label{moishrem}
\begin{itemize}
\item[(a)]
By automorphisms, to verify $\P$-Moishezon-ness it suffices to prove that for {\em some} $a\models p(x)$, whenever $c$ is such that $\stp(b/Ac)$ is almost $\bf P$-internal, then $\stp(a/Ac)$ is almost $\P$-internal.
\item[(b)]
A variant of $\P$-Moishezon-ness would be to consider types $\tp(a/Ab)$ such that if $\stp(b/Ac)$ is almost $\P$-internal {\em and $a\ind_{Ab}c$}, then $\stp(a/Ac)$ is $\P$-internal.
This weaker condition is closer to what was implicitly considered by the author and Pillay in~\cite{cbp}.
\item[(c)]
Our definition of $\P$-Moishezon-ness should be viewed as somewhat tentative because it does not seem to precisely generalise the complex-geometric notion of Moishezon morphism.
In Example~\ref{ccm} below we show that the generic type of a Moishezon morphism is $\mathbb P$-Moishezon.
But not all $\mathbb P$-Moishezon types arise in this way:
Suppose $Y$ is a non-algebraic strongly minimal compact complex manifold $f:X\to Y$ is a fibration with $\dim(X)=\dim(Y)+1$.
Then $f$ need not be Moishezon, but the generic type of $f$ will be $\mathbb P$-Moishezon.
Indeed, if $a$ is generic in $X$ then $\tp\big(a/f(a)\big)$ is $\mathbb P$-internal (by one-dimensionality, see~\cite{ret}), and as the only $\mathbb P$-internal extensions of $\tp\big(f(a)\big)$ are algebraic extensions, $\tp\big(a/f(a)\big)$ is in fact be $\mathbb P$-Moishezon.
\end{itemize}
\end{remark}

\begin{example}
\label{ccm}
{\em Working in $\operatorname{CCM}$, suppose $X$ and $Y$ are irreducible compact complex spaces, and $f:X\to Y$ is a Moishezon morphism.
If $a\in X$ is generic, then $\stp\big(a/f(a)\big)$ is $\mathbb P$-Moishezon over $\emptyset$.}
\end{example}

\begin{proof}
Let $b:=f(a)$ and suppose $\stp(b/c)$ is $\mathbb P$-internal.
(Recall that in $\operatorname{CCM}$, almost internality and internality coincide.)
Let $U:=\locus(a,b,c)$, $V:=\locus(b,c)$, and $W:=\locus(c)$.
Then, as $\stp(bc/c)$ is $\mathbb P$-internal, there exists $W'\to W$ such that $V\times_WW'\to W'$ meromorphically embeds into $W'\times \mathbb P_n$ over $W'$.
In particular, $V\times_WW'\to W'$ is Moishezon.

On the other hand, since $f$ is Moishezon, so is $\Gamma(f)\to Y$, where $\Gamma(f)=\locus(a,b)$ is the graph of $f$.
Moishezon morphisms are preserved by base change.
Hence $\Gamma(f)\times_YV\to V$ is Moishezon.
Note that $Y=\locus(b)$.
As $U$ is contained in $\Gamma(f)\times_YV$ over $V$, $U\to V$ is also Moishezon.
Hence $U\times_V(V\times_WW')=U\times_WW'\to V\times_WW'$ is Moishezon.

The composition of Moishezon morphisms is Moishezon.
So $U\times_WW'\to W'$ is Moishezon.
Hence there exists $W''\to W'$ such that $U\times_WW''\to W''$ meromorphically embeds in $W''\times\mathbb P_m$ over $W''$ for some $m$.
It follows that the generic type of $U\to W$, which is $\stp(ab/c)$, is $\mathbb P$-internal.
Hence, $\stp(a/c)$ is $\mathbb P$-internal, as desired.
\end{proof}

The following properties of $\P$-Moishezon-ness are immediate consequences of the of the definitions.

\begin{proposition}
\label{firstprop}
Suppose $p(x)\in S(Ab)$ is stationary and $\P$ is an $\aut_A(\overline M)$-invariant set of partial types.
\begin{itemize}
\item[(a)]
If $p$ is $\P_A$-algebraic then it is $\P$-Moishezon over $A$.
In particular, if $p$ is almost $\bf P_A$-internal then some nonforking extension of $p$ is $\bf P$-Moishezon over $A$.\footnote{Note that, unlike almost $\bf P$-internality, having a $\bf P$-Moishezon nonforking extension does not imply being $\bf P$-Moishezon.}
\item[(b)]
If $p$ is $\bf P$-Moishezon over $A$ then it is almost $\bf P$-internal.
\item[(c)]
If $p\upharpoonright A$ is stationary and almost $\bf P$-internal then $p$ is $\bf P$-Moishezon over $A$.
\item[(d)]
If $p$ is $\bf P$-Moishezon over $A$ then every extension of $p$ is also.
\item[(e)]
If $p$ is $\bf P$-Moishezon over $A$ and $a'\in\acl(Aba)$ for some $a\models p(x)$,
then $\stp(a'/Ab)$ is $\bf P$-Moishezon over $A$.
\item[(f)]
If $\stp(a/Ab)$ and $\stp(a'/Ab)$ are $\bf P$-Moishezon over $A$, then so is $\stp(aa'/Ab)$.
\item[(g)]
If $\stp(a/Ab)$ and $\stp(b/Ac)$ are $\bf P$-Moishezon over $A$,
then so is $\stp(a/Ac)$.
\item[(h)]
If $\stp(a/Abc)$ is $\bf P$-Moishezon over $A$ then it is $\bf P$-Moishezon over $Ab$.
\end{itemize}
\end{proposition}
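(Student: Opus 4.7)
The plan is to handle all eight parts by mechanically unpacking the definition of $\P$-Moishezon-ness and applying standard properties of almost $\P$-internality, specifically its preservation under projection, algebraic closure, finite unions, and extension of the base.

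Parts (b), (d)--(h) reduce to brief manipulations. For (b), instantiate the Moishezon hypothesis with $c := b$: $\stp(b/Ab)$ is algebraic and hence trivially almost $\P$-internal, so $p = \stp(a/Ab)$ is forced to be almost $\P$-internal. For (d), given an extension $q$ of $p$ over $Abd$, any almost $\P$-internal $\stp(bd/Ac)$ projects to an almost $\P$-internal $\stp(b/Ac)$, and $p$ being Moishezon yields $\stp(a/Ac)$ almost $\P$-internal. For (e), applying $p$'s Moishezon property gives $\stp(a/Ac)$ almost $\P$-internal, hence $\stp(ab/Ac)$ is almost $\P$-internal, and the containment $a' \in \acl(Aab)$ transfers this to $\stp(a'/Ac)$. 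Parts (f) and (g) are pairing and two-step composition. For (h), if $\stp(c/Abd)$ is almost $\P$-internal then so is $\stp(bc/Abd)$ (since $b \in Abd$), and applying the Moishezon-over-$A$ hypothesis for $\stp(a/Abc)$ with witness tuple $bd$ yields $\stp(a/Abd)$ almost $\P$-internal.

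Parts (a) and (c) require slightly more attention because they use the definition of almost $\P$-internality directly. For (a), suppose $a \in \acl(Ab \, \P_A^{\overline M})$ for $a \models p$, and let $C \supseteq Ac$ witness almost $\P$-internality of $\stp(b/Ac)$, so $b \ind_{Ac} C$ and $b \in \acl(C\bar e)$ with $\bar e \subseteq \P_C^{\overline M}$. By Remark~\ref{moishrem}(a), we may choose the representative $a \models p$ with $a \ind_{Ab} C$; transitivity of forking then gives $ab \ind_{Ac} C$. Since $Ab \subseteq \acl(C\bar e)$ and $\P_A \subseteq \P_C$, we have $a \in \acl(C \, \P_C^{\overline M})$, so $C$ witnesses almost $\P$-internality of $\stp(a/Ac)$. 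The parenthetical claim then follows because almost $\P_A$-internality of $p$ provides a nonforking extension that is $\P_A$-algebraic, hence $\P$-Moishezon over $A$ by what we just proved. For (c), the point is simply that almost $\P$-internality of $\stp(a/A)$ is preserved under adding parameters to the base (the witness $C\bar e$ may be moved by an $Aa$-automorphism into a position independent of the new parameters over $Aa$), so $\stp(a/Ac)$ is automatically almost $\P$-internal and the Moishezon condition holds vacuously.

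The only real care needed is the independence-bookkeeping in (a) and (c); no conceptual obstacle arises.
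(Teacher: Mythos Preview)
Your proposal is correct and follows essentially the same line as the paper's proof: each part is handled by directly unwinding Definition~\ref{moishezon} together with the standard closure properties of almost $\P$-internality, and your treatments of (a)--(h) match the paper's almost step for step. One very minor quibble: in (a) your appeal to Remark~\ref{moishrem}(a) to ``choose the representative $a$ with $a\ind_{Ab}C$'' is phrased as if $a$ may depend on $c$, whereas the remark fixes $a$ first; the paper glosses over the same point, and the clean fix in either case is to instead move the witness set $C$ over $Abc$ so that $C\ind_{Abc}a$.
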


\begin{proof}
For part~(a) suppose $\stp(b/Ac)$ is almost $\P$-internal.
So there exists $C\supseteq Ac$ such that $b\ind_{Ac}C$ and $b\in \acl\big(C{\bf P}_C^{\overline M}\big)$.
Let $a\models p(x)|Cb$.
Then $a\ind_{Ac}C$ and, since $p^{\overline{M}}\subseteq\acl\big(Ab{\bf P}_A^{\overline M}\big)$, we have $a\in\acl\big(C{\bf P}_C^{\overline M}\big)$.
So $\stp(a/Ac)$ is almost $\bf P$-internal.
Hence $p(x)=\tp(a/Ab)$ is $\P$-Moishezon over $A$.

In particular, if $p$ is almost $\P_A$-internal then from some $b'\supseteq b$, the nonforking extension $q(x):=p(x)|Ab'$ is $\P_A$-algebraic, and hence $\P$-Moishezon over $A$.

For part~(b) just take $c=b$ in the definition of $\P$-Moishezon.

Part~(c).
Suppose $p\upharpoonright A$ is stationary and almost $\bf P$-internal.
Then every extension of $p\upharpoonright A$ is almost $\P$-internal.
In particular $\stp(a/Ac)$ is almost $\bf P$-internal for any $a\models p(x)$ and any $c$ such that $\stp(b/Ac)$ is almost $\bf P$-internal.
That is, $p(x)=\tp(a/Ab)$ is $\P$-Moishezon over $A$.

Part~(d).
Suppose  $q\in S(Ab')$ is an extension of $p$, where $b'\supseteq b$.
Suppose $\stp(b'/Ac)$ is $\bf P$-internal and let $a\models q(x)$.
Then $a\models p(x)$ and $\stp(b/Ac)$ is $\bf P$-internal, so that $\stp(a/Ac)$ is almost $\P$-internal, as desired.

Part~(e).
If $\stp(b/Ac)$ is almost $\bf P$-internal then by the $\P$-Moishezon-ness of $\stp(a/Ab)$ over $A$, $\stp(a/Ac)$ is also almost $\bf P$-internal.
So $\stp(ab/Ac)$ is almost $\bf P$-internal.
Since $a'\in\acl(Aba)$, $\stp(a'/Ac)$ is almost $\bf P$-internal, as desired.

Part~(f).
If $\stp(b/Ac)$ is almost $\bf P$-internal then by $\bf P$-Moishezon-ness both $\stp(a/Ac)$ and $\stp(a'/Ac)$ are almost $\bf P$-internal.
Hence, $\stp(aa'/Ac)$ is almost $\bf P$-internal, as desired.

Part~(g).
If $\stp(c/Ad)$ is almost $\bf P$-internal then by the $\P$-Moishezon-ness of $\stp(b/Ac)$ over $A$, $\stp(b/Ad)$ is also almost $\bf P$-internal.
But then, by the $\P$-Moishezon-ness of $\stp(a/Ab)$ over $A$, $\stp(a/Ad)$ is almost $\bf P$-internal, as desired.

Finally, to prove part~(h), suppose $\stp(c/Abd)$ is almost $\P$-internal.
Then so is $\stp(bc/Abd)$, and so, by the $\P$-Moishezon-ness of $\stp(a/Abc)$, $\stp(a/Abd)$ is almost $\P$-internal, as desired.
\end{proof}

\section{A criterion for being Moishezon to the set of\\ nonmodular minimal types}
\label{section-crit}

\noindent
In this section we take $\bf P$ to be the set of all nonmodular minimal types.
An essential part of understanding the structure of finite-rank definable sets in a stable theory is the task of determining whether a given type bears any relation to the set of nomodular minimal types.
For example, is it non-orthogonal to $\P$? If so, is it almost internal to $\P$?
In this section we are interested in the question: When is an almost $\P$-internal type actually $\P$-Moishezon?
The following algebraicity criterion in complex geometry will serve as a template for us:

\begin{theorem}[Campana~\cite{campana81}, Th\'eore\`me 2]
\label{1algcrit}
Suppose $X$ is a compact complex space of K\"ahler-type, and $f:X\to Y$ is a surjective morphism whose fibres are Moishezon.\footnote{Recall that a compact complex space $A$ is {\em Moishezon} if the morphism $f:A\to\{\text{pt.}\}$ is Moishezon. Equivalently, $A$ is bimeromorphic to a projective algebraic variety.}
If there exists an analytic subspace $A\subseteq X$ such that $f|_A:A\to Y$ is Moishezon, then $f$ is Moishezon.
In particular, if $f$ has a meromorphic section then it is Moishezon.
\end{theorem}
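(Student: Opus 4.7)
The plan is to prove this via the theory of relative cycle spaces (Douady--Barlet spaces) for K\"ahler morphisms, exploiting the interaction between three inputs: the K\"ahler hypothesis on $X$, which makes relative cycle spaces well-behaved and the push-forwards of coherent sheaves work as expected; the Moishezon hypothesis on fibres, which ensures each $X_y$ carries an abundance of algebraic cycles; and the Moishezon subspace $A$, which provides global information over $Y$ that ties the fibrewise algebraic structures into a single family. The goal is to produce a coherent sheaf $\mathcal{F}$ on $Y$ and a meromorphic $Y$-embedding $X\dashrightarrow \mathbb{P}(\mathcal{F})$.

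The first step is to form the relative Douady space $\mathcal{D}(X/Y)\to Y$ of compact analytic cycles in fibres of $f$. By Fujiki's theorem on K\"ahler morphisms, components of $\mathcal{D}(X/Y)$ with bounded volume are proper over $Y$, which is the technical substitute for the projective existence results one would use in the algebraic setting. Next, I would use the given subspace $A\subseteq X$ to produce a Moishezon family of cycles in fibres: restricting $A\to Y$ to each fibre gives cycles $A_y\subseteq X_y$, and the hypothesis that $A\to Y$ is itself Moishezon means this family is parameterised by an algebraic piece of $\mathcal{D}(X/Y)$ over $Y$. Combined with the fibrewise Moishezon hypothesis on $f$, this should allow one to select, analytically in $y$, a relatively ample line bundle (or a suitable twist of a universal line bundle adapted to $A$) on a modification of $X$ over $Y$.

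The main obstacle lies in the final step: passing from ``fibrewise projective embeddings that depend on $y$'' to an honest meromorphic $Y$-embedding into a single $\mathbb{P}(\mathcal{F})$. One must use $A$ to choose polarisations compatibly in families, take the relative push-forward $\mathcal{F}:=f_*\mathcal{L}$ for an appropriate relatively ample $\mathcal{L}$, and verify both coherence of $\mathcal{F}$ (where K\"ahlerness and Fujiki's relative properness enter essentially) and that the canonical rational map $X\dashrightarrow \mathbb{P}(\mathcal{F})$ is generically a fibrewise embedding. In the special case of a meromorphic section $\sigma:Y\dashrightarrow X$, this obstacle is much milder: the section pulls back fibrewise very ample divisors to a varying family of divisors on $X$, and one can build $\mathcal{L}$ directly by taking a divisor vanishing to prescribed order along $\sigma(Y)$, which automatically interpolates the fibrewise constructions. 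The general case should reduce to an analogous argument with $A$ playing the role of a ``thickened section,'' using the Moishezon structure of $A\to Y$ to upgrade it to an actual algebraic datum over $Y$.
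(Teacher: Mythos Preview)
The paper does not prove this theorem. Theorem~\ref{1algcrit} is quoted from Campana~\cite{campana81} purely as motivation for the model-theoretic analogue developed in Section~\ref{section-crit}; the only remark the paper adds is that Campana's original statement assumes $Y$ Moishezon and concludes $X$ Moishezon, and that his argument goes through verbatim in the relative form stated here. There is therefore no ``paper's own proof'' to compare your proposal against.

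If you intend your write-up as an independent sketch of Campana's argument, the overall architecture---relative Barlet/Douady spaces, Fujiki's compactness for components of bounded volume under the K\"ahler hypothesis, and using $A$ to globalise the fibrewise algebraic data---is the right circle of ideas. But as written the sketch is not a proof: the passage from ``fibrewise Moishezon plus a Moishezon family of cycles'' to the existence of a single relatively big (or relatively ample on a modification) line bundle $\mathcal{L}$ is the entire content of the theorem, and you have only named it as an obstacle rather than resolved it. In Campana's actual argument the crucial step goes through the \emph{relative algebraic reduction} and an analysis of the relative Albanese/Picard map rather than by directly constructing $\mathcal{L}$ from $A$ as you suggest; your proposed construction of $\mathcal{L}$ via divisors vanishing along $\sigma(Y)$ in the section case is not how the argument proceeds and would not obviously yield relative ampleness. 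If you want to include a genuine proof, you should consult~\cite{campana81} directly; for the purposes of this paper, however, the theorem is used only as a template and no proof is required.
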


Actually, we have stated the theorem here in a more general way than it appears in~\cite{campana81} where Campana assumes in addition that $Y$ is Moishezon and then concludes that $X$ is Moishezon.
But the proof he gives there goes through word for word in the relative setting.
The model-theoretic significance of the K\"ahler assumption has to do with saturation in an appropriate language, for details about which we refer the reader to~\cite{sat}.
Let us only mention that if $X$ is K\"ahler then generic type of $f$ is $\mathbb P$-internal if and only if the general fibres of $f$ are Moishezon.
So the above theorem is about passing from $\mathbb P$-internality to Moishezon-ness.
To find a model-theoretic counterpart to this theorem (or more accurately, to the ``in particular'' clause) we begin by approximating the notion of a section as follows.

\begin{definition}
\label{mt-section}
Let us say that a complete stationary type $p$ {\em admits a diagonal section} if there exists $a\in\acl\big(\dom(p)\big)$ such that $a\models p\upharpoonright_{\cb(p)}$.
\end{definition}

\begin{remark}
\label{diagsect}
Every stationary type has a nonforking extension that admits a diagonal section: if $a\models p(x)\in S(B)$ then the nonforking extension $p|Ba$ admits a diagonal section, namely $a$ itself.
This is analogous to the fact that for every morphism $X\to Y$, the diagonal map is a section to $X\times_Y X\to X$.
\end{remark}

What one might hope for, at least in the finite $U$-rank setting, is that if $p$ is $\P$-internal and admits a diagonal section then it is $\P$-Moishezon (over the empty set).
We will in fact prove something like this in Theorem~\ref{moishcrit} below, but only under the additional hypothesis of the ``canonical base property,'', which we now discuss in the following digression.

\subsection{The canonical base property}
Motivated by complex-geometric results due to Campana and Fujiki from the nineteen eighties, Pillay~\cite{pillay01} introduced the following condition:
$T$ has the {\em canonical base property} (CBP) if whenever $U(a/A)<\omega$ and $b=\cb(a/Ab)$ then $\stp(b/Aa)$ is almost ${\bf P}$-internal.\footnote{Here $\P$ is still the set of all nonmodular minimal types, but in fact, by the results of~\cite{chatzidakis10}, it changes nothing if we replace $\P$ by the set of {\em all} minimal types (see Remark 1.1(b) of~\cite{cbp}).}
The complex-geometric results alluded to above more or less directly imply that $\operatorname{CCM}$ has the CBP.
In~\cite{pillayziegler03} Pillay and Ziegler show that differentially closed fields have the CBP.
Indeed, no stable theories have been shown to fail the CBP.
Again motivated by the complex-geometric situation, the author and Pillay introduced an apparent strengthening of this condition in~\cite{cbp}, which we called the {\em uniform canonical base property} (UCBP), and which essentially says that if $U(a/A)<\omega$ and $b=\cb(a/Ab)$ then $\stp(b/Aa)$ is almost ${\bf P}$-Moishezon over $A$.
``Essentially'' because we were working with a slightly weaker notion than the Moishezon-ness, see Remark~\ref{moishrem}(b).
In any case, we pointed out that $\operatorname{CCM}$ has the UCBP, and we asked whether or not the CBP always implies the UCBP.
Recently, Chatzidakis has shown that this is indeed the case.
In terms of Moishezon-ness, her theorem can be stated as follows:

\begin{theorem}[Chatzidakis~\cite{chatzidakis10}, Theorem~2.9]
\label{cbp-ucbp}
Suppose $T$ has the CBP.
If $U(a/A)<\omega$ and $b=\cb(a/Ab)$, then $\stp(b/Aa)$ is ${\bf P}$-Moishezon over $A$.
\end{theorem}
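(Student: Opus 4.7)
My plan is to upgrade the almost-$\P$-internality of $\stp(b/Aa)$ supplied by the CBP to full $\P$-Moishezon-ness over $A$.  Unwinding Definition~\ref{moishezon} and using Remark~\ref{moishrem}(a), it suffices to show: for every $c$ with $\stp(a/Ac)$ almost $\P$-internal, $\stp(b/Ac)$ is almost $\P$-internal.  After replacing $c$ by a suitable extension, I may assume there is $F\supseteq Ac$ with $F\ind_{Ac}b$ (arranged by a generic choice) and $a\in\acl(F\P_F^{\overline M})$; the goal becomes producing $d\supseteq Ac$ with $b\ind_{Ac}d$ and $b\in\acl(d\P_d^{\overline M})$.

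For the construction, I would take a Morley sequence $(a_i,c_i,F_i)_{i=1}^n$ in $\stp(a,c,F/Ab)$ over $Ab$, starting at $(a_1,c_1,F_1)=(a,c,F)$ and of length large enough that $b\in\acl(Aa_1\cdots a_n)$---possible because the projection $(a_i)_i$ is a Morley sequence in $\stp(a/Ab)$ and $b=\cb(a/Ab)$.  Each $(a_i,c_i,F_i)\equiv_{Ab}(a,c,F)$, so by $\aut_A(\overline M)$-invariance of $\P$ we have $a_i\in\acl(F_i\P_{F_i}^{\overline M})$ for every $i$ and $F_i\supseteq Ac_i$.  Setting $d:=F_1F_2\cdots F_n$, which contains $Ac=Ac_1\subseteq F_1$, one then has $b\in\acl(Aa_1\cdots a_n)\subseteq\acl(d\P_d^{\overline M})$ as desired.

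The main obstacle is verifying $b\ind_{Ac}d$.  Morley indiscernibility of $(a_i,c_i,F_i)_i$ over $Ab$ gives independence of the later terms from the first over $Ab$, which is the \emph{wrong} base---what is needed is independence over $Ac$.  To bridge this gap I expect the proof must invoke the intermediate canonical base $b^*:=\cb(a/Abc)$: since $\stp(a/Abc)$ is $\P$-algebraic, the nonforking extension of $\stp(a/Ab^*)$ to $Abc$ is $\P$-algebraic, so $\stp(a/Ab^*)$ is almost $\P$-internal, and applying the CBP a second time to the pair $(a,b^*)$ yields that $\stp(b^*/Aa)$ is almost $\P$-internal.  A careful bookkeeping of how $b$ is recovered from finitely many $A$-conjugates of $b^*$ along the Morley sequence of $c$'s should then secure both the algebraicity and the required independence; making this precise is, I expect, the delicate part of Chatzidakis's argument.
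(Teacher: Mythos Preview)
The paper does not actually prove this theorem: it is quoted as Theorem~2.9 of Chatzidakis~\cite{chatzidakis10}, with no proof supplied.  What the paper does tell us about Chatzidakis's argument (see the paragraph following the theorem, and the footnote in Section~\ref{section-cored}) is that it proceeds through two prior results of hers: the existence of $\P$-coreductions (her Theorem~2.8), and the fact that under CBP, if $b=\cb(a/Ab)$ then $\tp\big(b/\acl(Aa)\cap\acl(Ab)\big)$ is already almost $\P$-internal.  Once one has the latter, the deduction of Theorem~\ref{cbp-ucbp} is short: writing $Ae=\acl(Aa)\cap\acl(Ab)$, if $\stp(a/Ac)$ is almost $\P$-internal then so is $\stp(e/Ac)$ (since $e\in\acl(Aa)$); combining this with the almost $\P$-internality of $\stp(b/Ae)$ via the standard tower property of almost internality yields that $\stp(b/Ac)$ is almost $\P$-internal.

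Your approach is genuinely different: a direct Morley-sequence construction rather than a passage through coreductions.  But, as you yourself flag, it is incomplete at exactly the decisive point---verifying $b\ind_{Ac}d$.  The Morley sequence is taken over $Ab$, and there is no general mechanism for transporting independence over $Ab$ to independence over $Ac$; this is not mere bookkeeping but the heart of the difficulty.  Your proposed repair via $b^*=\cb(a/Abc)$ remains only a sketch: you observe that CBP gives $\stp(b^*/Aa)$ almost $\P$-internal, but you do not explain how $b$ is to be recovered from conjugates of $b^*$ in a way that secures independence over $Ac$, and I do not see that it can be made to work along these lines.  In particular, your expectation that ``this is the delicate part of Chatzidakis's argument'' is off the mark: according to the paper her argument goes through coreductions and the intersection $\acl(Aa)\cap\acl(Ab)$, and the independence obstacle you encounter simply does not arise on that route.
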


One of the ingredients that goes into proving Theorem~\ref{cbp-ucbp} is the following consequence of the CBP, also due to Chatzidakis~\cite{chatzidakis10}: if $U(a/A)<\omega$ and $b=\cb(a/Ab)$, then $\tp\big(b/\acl(Aa)\cap\acl(Ab)\big)$ is almost $\bf P$-internal.
In~\cite{cbp} the author and Pillay gave a more geometrically motivated proof of this fact (using the UCBP).
Given Theorem~\ref{cbp-ucbp}, one is tempted to ask whether or not $\tp\big(b/\acl(Aa)\cap\acl(Ab)\big)$ is in fact $\bf P$-Moishezon over $A$.
While this is too much to ask, our arguments in~\cite{cbp} can be stretched to give:

\begin{lemma}
\label{strong44}
Suppose the CBP holds for $T$, $U(a/A)<\omega$, $b=\cb(a/Ab)$ and $a=\cb(b/Aa)$.
Then $\tp\big(b/\acl(Aa)\cap\acl(Ab)\big) \ | \ Aa$ is $\bf P$-Moishezon over $A$.
\end{lemma}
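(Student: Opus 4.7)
Write $e := \acl(Aa) \cap \acl(Ab)$ and fix a finite tuple $d$ with $\acl(Ad) = e$. The essential input is the consequence of the CBP already cited above: $\tp(b/e)$ is almost $\P$-internal. Choose $b' \models \tp(b/e)\,|\,Aa$, so that $\tp(b'/e) = \tp(b/e)$ and $b' \ind_e Aa$. By Remark~\ref{moishrem}(a) it suffices to verify the $\P$-Moishezon condition for this one fixed $b'$: given any $c$ such that $\stp(a/Ac)$ is almost $\P$-internal, I must show that $\stp(b'/Ac)$ is almost $\P$-internal.

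Fix such a $c$. The first step is that $\stp(d/Ac)$ is almost $\P$-internal: because $d \in \acl(Aa)$, any witness $C \supseteq Ac$ for the almost $\P$-internality of $\stp(a/Ac)$ automatically witnesses that of $\stp(d/Ac)$, and the required independence $d \ind_{Ac} C$ is inherited from $a \ind_{Ac} C$ through algebraic closure.

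The second, and main, step is to show that $\stp(b'/Acd)$ is almost $\P$-internal. I would start from any witness $C_{b'} \supseteq e$ for the almost $\P$-internality of $\tp(b'/e)$ --- so $b' \in \acl(C_{b'}\,\P^{\overline M}_{C_{b'}})$ and $b' \ind_e C_{b'}$ --- and then invoke the extension axiom to replace $C_{b'}$ by an $eb'$-conjugate satisfying $C_{b'} \ind_{eb'} Ac$; both witness properties, being preserved under $eb'$-automorphism, are retained. A short forking calculation then yields $b' \ind_{eAc} C_{b'}$: transitivity of $C_{b'} \ind_e b'$ with $C_{b'} \ind_{eb'} Ac$ gives $C_{b'} \ind_e b'Ac$, and the pair lemma applied to the symmetric $b'Ac \ind_e C_{b'}$ extracts $b' \ind_{eAc} C_{b'}$. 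Thus $C_{b'} \cup Ac$ serves as a witness to the almost $\P$-internality of $\stp(b'/eAc) = \stp(b'/Acd)$.

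Finally, transitivity of almost $\P$-internality combines the two previous steps: $\stp(d/Ac)$ and $\stp(b'/Acd)$ both being almost $\P$-internal force $\stp(b'd/Ac)$ to be almost $\P$-internal, whence so is $\stp(b'/Ac)$, as required. The main obstacle is the witness juggling in the second step, where the $e$-based witness for $b'$ must be promoted to one compatible with the larger base $Acd$ without sacrificing the independence demanded by almost $\P$-internality; the outer two steps are routine invocations of standard closure properties of almost $\P$-internality.
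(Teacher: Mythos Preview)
Your argument is correct, and it takes a genuinely different route from the paper's proof. The paper invokes Theorem~\ref{cbp-ucbp} twice --- using both hypotheses $b=\cb(a/Ab)$ and $a=\cb(b/Aa)$ --- to get that $\stp(a/Ab)$ and $\stp(b/Aa)$ are each $\P$-Moishezon over $A$, and then runs the zig-zag recursion from~\cite{cbp} (building $a_i,b_i$ with $\stp(b_i/Aa)$ $\P$-Moishezon while preserving $\acl(Aa_i)\cap\acl(Ab_i)=Ad$) until finite rank forces $a\ind_{Ad}b_\ell$, at which point $\tp(b_\ell/Aa)=\tp(b/Ad)\,|\,Aa$ is $\P$-Moishezon.

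You instead feed in the stronger Chatzidakis consequence that $\tp(b/e)$ is already almost $\P$-internal, and observe that this alone suffices: if $\stp(a/Ac)$ is almost $\P$-internal then so is $\stp(d/Ac)$ (as $d\in\acl(Aa)$), while $\stp(b'/Acd)$ is almost $\P$-internal as an extension of $\tp(b'/Ad)$, and transitivity of almost $\P$-internality finishes. Two remarks are worth making. First, your ``main step'' is not an obstacle at all: it is exactly the standard fact that almost $\P$-internality passes to arbitrary extensions of the base, and the witness-juggling you spell out is the routine proof of that fact --- no difficulty specific to this lemma arises there. Second, and more interestingly, your argument never uses the hypothesis $a=\cb(b/Aa)$; it goes through with only $b=\cb(a/Ab)$, so you have in fact proved a stronger statement than the lemma claims. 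What you lose relative to the paper's proof is self-containment: you are relying on Chatzidakis' internality result for $\tp(b/e)$, whereas the paper's recursion needs only the Moishezon conclusion of Theorem~\ref{cbp-ucbp}.
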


\begin{proof}
While this is stronger than the statement of Proposition~4.4 in~\cite{cbp} (which only concludes that $\tp\big(b/\acl(Aa)\cap\acl(Ab)\big)$ is almost $\P$-internal), it actually follows from the proof given there.
Let $d$ be such that $Ad=\acl(Aa)\cap\acl(Ab)$.

By Theorem~\ref{cbp-ucbp}, both $\stp(a/Ab)$ and $\stp(b/Aa)$ are $\P$-Moishezon over $A$.
Now, recursively define $a=a_0,a_1,\dots$ and $b=b_0,b_1,\dots$ such that
\begin{itemize}
\item[(i)]
$a_{i+1}\models\stp(a_i/Ab_i)|Ab_ia$
\item[(ii)]
$b_{i+1}\models\stp(b_i/Aa_{i+1})|Aa_{i+1}b$
\item[(iii)]
$\stp(b_i/Aa)$, $\stp(a_i/Aa)$ are $\P$-Moishezon over $A$.
\end{itemize}
This is done as follows.
Suppose we have already defined $a=a_0,a_1,\dots,a_i$ and $b=b_0,b_1,\dots,b_i$.
Note that by the induction hypothesis $a_ib_i\models\stp(ab/A)$.
Hence $\stp(a_i/Ab_i)$ is $\P$-Moishezon over $A$.
Let $a_{i+1}\models\stp(a_i/Ab_i)|Ab_ia$.
Then $\stp(a_{i+1}/Ab_i)$ is $\P$-Moishezon over $A$.
By the inductive hypothesis, $\stp(b_i/Aa)$ is $\P$-Moishezon over $A$.
So, by Proposition~\ref{firstprop}(h), $\stp(a_{i+1}/Aa)$ is $\P$-Moishezon over $A$.
Next let $b_{i+1}\models\stp(b_i/Aa_{i+1})|Aa_{i+1}b$.
As $\stp(b/Aa)$ is $\P$-Moishezon over $A$, so is $\stp(b_{i+1}/Aa_{i+1})$.
We have just shown that $\stp(a_{i+1}/Aa)$ is $\P$-Moishezon over $A$, and so we get, by Proposition~\ref{firstprop}(h) again, that $\stp(b_{i+1}/Aa)$ is $\P$-Moishezon over $A$, as desired.

Having constructed these sequences, note that by~(i) and~(ii), $\acl(Aa_i)\cap\acl(Ab_i)=\acl(Aa)\cap\acl(Ab)=Ad$ and $a_ib_i\models\stp(ab/Ad)$, for all $i\geq 0$.
Moreover, because of finite $U$-rank, eventually,
$$a\ind_{Ad}a_\ell b_\ell$$
for sufficiently large $\ell\geq 0$.
Indeed, this is Lemma~2.2 of~\cite{cbp} -- see in particular the statement marked by $(*)$ in the proof of that Lemma.
So $\tp(b_\ell/Aa)$ is the nonforking extension of $\tp(b/Ad)$, and it is $\P$-Moishezon over $A$ by~(iii).
\end{proof}

We are interested in the following consequence for minimal canonical types.

\begin{corollary}
\label{strong44rank1}
Suppose the CBP holds for $T$ and $U(a/A)<\omega$.
If $U(a/Ab)=1$, $b=\cb(a/Ab)$, and $b\notin\acl(Aa)$, then $\tp\big(a/\acl(Aa)\cap\acl(Ab)\big) \ | \ Aa$ is $\bf P$-Moishezon over $A$.
\end{corollary}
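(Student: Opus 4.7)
My plan is to derive this from Lemma~\ref{strong44} after showing, via rank computations, that the hypotheses here force $a$ to be interalgebraic over $A$ with $c := \cb(b/Aa)$. This puts us in the symmetric setup of that lemma.

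For the reduction, note that $c \in \acl(Aa)$ and, by symmetry of forking, $a \ind_{Ac} b$. I would argue $a \in \acl(Ac)$ by contradiction: if $U(a/Ac) \geq 1$, then $U(a/Abc) = U(a/Ac)$ by nonforking, while $U(a/Abc) \leq U(a/Ab) = 1$ by monotonicity, so $U(a/Ac) = U(a/Abc) = 1$, giving $a \ind_{Ab} c$. Since $c$ is algebraic over $Aab$, this forces $c \in \acl(Ab)$, hence $c \in Ad := \acl(Aa) \cap \acl(Ab)$. One more rank step: $U(a/Ad) \leq U(a/Ac) = 1$, and $U(a/Ad) = 0$ would force $a \in \acl(Ab)$, contradicting $U(a/Ab) = 1$; so $U(a/Ad) = 1$. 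Then $\tp(a/Ab)$, extending $\tp(a/Ad)$ with the same $U$-rank, is a nonforking extension, so $b = \cb(a/Ab) \in \acl(Ad) \subseteq \acl(Aa)$, contradicting $b \notin \acl(Aa)$. Hence $a \in \acl(Ac)$, and $a$ is interalgebraic over $A$ with $c$.

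With this in hand, the hypotheses of Lemma~\ref{strong44} are met: $U(b/A) < \omega$ because $b$ lies in the algebraic closure of a finite Morley sequence of $\tp(a/Ab)$, and up to interalgebraicity, $a = \cb(b/Aa)$. The stated conclusion of that lemma concerns $\tp(b/Ad)|Aa$, but its proof shows more: property~(iii) of the inductively-defined sequences there guarantees that $\stp(a_i/Aa)$ is $\P$-Moishezon over $A$ for every $i$, and eventually $a \ind_{Ad} a_\ell b_\ell$, so $\tp(a_\ell/Aa) = \tp(a/Ad)|Aa$ is $\P$-Moishezon over $A$---exactly the corollary.

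The main obstacle is the opening rank argument---chaining forking identities carefully to produce the contradiction $b \in \acl(Aa)$. Once $a$ is known to be interalgebraic with $\cb(b/Aa)$, the rest amounts to reading off a stronger conclusion from the proof of Lemma~\ref{strong44} than what is explicitly stated.
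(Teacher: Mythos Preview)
Your argument is correct, and it follows a somewhat different route from the paper's.

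The paper also sets $a'=\cb(b/Aa)$ but then splits into two cases according to whether $a'\in\acl(Ab)$ or not. In the second case it establishes interalgebraicity of $a$ and $a'$ (as you do) and applies Lemma~\ref{strong44} to obtain that $\tp(b/Ad)|Aa$ is $\P$-Moishezon over $A$; it then runs a separate ``two-conjugate'' argument: taking $a_1b_1\models\tp(ab/Ad)|Aa$, choosing $b_2\models\tp(b_1/Aa_1a)$ with $b_2\notin\acl(Ab_1)$, and using $U$-rank~$1$ to conclude $a_1\in\acl(Ab_1b_2)$, whence $\tp(a_1/Aa)$ is $\P$-Moishezon by Proposition~\ref{firstprop}(e),(f). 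Your approach improves on this in two respects. First, your rank computation shows that the paper's first case (where $a'\in\acl(Ab)$) is in fact vacuous under the hypothesis $b\notin\acl(Aa)$, so no case split is needed. Second, you bypass the two-conjugate step entirely by observing that the proof of Lemma~\ref{strong44} already records $\stp(a_i/Aa)$ as $\P$-Moishezon over $A$ alongside $\stp(b_i/Aa)$, so the desired conclusion about $\tp(a/Ad)|Aa$ can be read off directly once $a\ind_{Ad}a_\ell b_\ell$. The trade-off is that the paper cites Lemma~\ref{strong44} as a black box (at the cost of the extra conjugate argument exploiting rank~$1$), whereas you must reopen its proof; arguably the lemma should have been stated symmetrically in $a$ and $b$ to begin with, which would make your deduction immediate.
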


\begin{proof}
Again, let $d$ be such that $Ad=\acl(Aa)\cap\acl(Ab)$.

We first show that $\tp(b/Ad)|Aa$ is $\bf P$-Moishezon over $A$.
Let $a'=\cb(b/Aa)$.
So $a'\in\acl(Aa)$.
If $a'\in\acl(Ab)$ then $\cb(b/Aa)\subseteq Ad$ and hence $b\ind_{Ad}a$.
It follows that $\tp(b/Ad)|Aa=\tp(b/Aa)$.
But by Theorem~\ref{cbp-ucbp}, $\tp(b/Aa)$ is $\P$-Moishezon over $A$ and we are done.
Hence we may assume that $a'\notin\acl(Ab)$.
But then, as $U(a/Ab)=1$, $a\in\acl(Aba')$.
Since $b\ind_{Aa'}a$, this implies that $a\in\acl(Aa')$.
Hence $a$ and $a'$ are interalgebraic over $A$.
So $a=\cb(b/Aa)$ also.
Now Lemma~\ref{strong44} applies and we have that $\tp(b/Ad)|Aa$ is $\bf P$-Moishezon over $A$.

To prove the corollary, we let $a_1b_1\models\tp(ab/Ad)|Aa$ and we show that $\tp(a_1/Aa)$ is $\P$-Moishezon over $A$.
Note that $b\notin\acl(Aa)\implies b_1\notin\acl(Aa_1)\implies b_1\notin\acl(Aa_1a)$, where the last implication is because $b_1\ind_{Aa_1}a$.
Hence there exists $b_2\models\tp(b_1/Aa_1a)$ with $b_2\notin\acl(Ab_1)$.
Let $p_{b_1}$ and $p_{b_2}$ be the conjugates of $\tp(a/Ab)$ obtained by replacing $b$ by $b_1$ and $b_2$ respectively.
As $b_1=\cb(p_{b_1})$, the fact that $b_2\notin\acl(Ab_1)$ means that $p_{b_1}\cup p_{b_2}$ must be a forking extension of $p_{b_1}$.
The latter being of rank $1$ implies that $p_{b_1}\cup p_{b_2}$ is algebraic.
Since $b_1$ and $b_2$ have the same type over $Aa_1$ and $a_1\models p_{b_1}$, we get that $a_1\models p_{b_2}$ also and so $a_1\in\acl(Ab_1b_2)$.
But $\tp(b_1/Aa)=\tp(b_2/Aa)$ is the nonforking extension of $\tp(b/Ad)$ to $Aa$, and is thus $\P$-Moishezon over $A$ by the preceeding paragraph.
Hence, by Proposition~\ref{firstprop} parts~(e) and~(f), $\tp(a_1/Aa)$ is $\P$-Moishezon over $A$.
\end{proof}

We now return to the promised criterion for when an almost internal type is in fact Moishezon.
For the sake of convenience we restrict ourselves to finite rank theories -- though everything can be formulated so as to apply to finite rank types in arbitrary stable theories (with the CBP).

\begin{theorem}
\label{moishcrit}
Suppose $T$ is a finite $U$-rank theory with the CBP, and $\P$ is the set of all nonmodular minimal types.
Suppose $p$ is a stationary type of $U$-rank at least two that is almost internal to a nonmodular minimal type.
If $p$ admits a diagonal section, then $p$ is $\P$-Moishezon over $\emptyset$.

In particular, every stationary type of rank at least two that is almost internal to a nonmodular minimal type becomes $\P$-Moishezon after taking a nonforking extension to a realisation.
\end{theorem}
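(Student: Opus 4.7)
First, the ``in particular'' clause follows from the main statement via Remark~\ref{diagsect}: if $q = \stp(a/B)$ has $U$-rank at least two and is almost internal to a nonmodular minimal type, then the nonforking extension $q|Ba$ has $a$ itself as a diagonal section, so the main statement applied to $q|Ba$ gives the conclusion. So I focus on the main statement.

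Let $p = \stp(\alpha/\beta)$ have canonical base $\gamma = \cb(p)$ and diagonal section $\alpha_0 \in \acl(\beta)$ with $\alpha_0 \models p|_\gamma$. By Remark~\ref{moishrem}(a), I fix $\alpha \models p$ and, given an arbitrary $\delta$ with $\stp(\beta/\delta)$ almost $\P$-internal, aim to show that $\stp(\alpha/\delta)$ is almost $\P$-internal.

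The plan proceeds through four observations. First, since $\alpha_0 \in \acl(\beta)$, almost $\P$-internality of $\stp(\beta/\delta)$ transfers to $\stp(\alpha_0/\delta)$. Second, because $\tp(\alpha_0/\gamma) = p|_\gamma$ is stationary with canonical base $\gamma$, Theorem~\ref{cbp-ucbp} applied with $a = \alpha_0$, $A = \emptyset$, and $b = \gamma$ gives that $\stp(\gamma/\alpha_0)$ is $\P$-Moishezon over $\emptyset$; unwinding the definition of $\P$-Moishezon, this turns the $\P$-internality of $\stp(\alpha_0/\delta)$ into $\P$-internality of $\stp(\gamma/\delta)$. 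Third, since $\tp(\alpha/\gamma) = p|_\gamma$ is almost $\P$-internal, and almost $\P$-internality of a stationary type is preserved under extensions of the parameter set (by realising a conjugate of the witness set over $\gamma$ independent from $\alpha\delta$), $\stp(\alpha/\gamma\delta)$ is almost $\P$-internal. Fourth, combining the $\P$-internality of $\stp(\gamma/\delta)$ and $\stp(\alpha/\gamma\delta)$ via the standard pairing principle yields that $\stp(\alpha\gamma/\delta)$, and hence $\stp(\alpha/\delta)$, is almost $\P$-internal.

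The step I would scrutinise most is the third: preservation of almost $\P$-internality under the (possibly forking) extension from $\tp(\alpha/\gamma)$ to $\stp(\alpha/\gamma\delta)$. This plan invokes neither Corollary~\ref{strong44rank1} nor the rank-at-least-two hypothesis, which makes me suspect that either the theorem in fact holds more generally or, more likely, the author's intended proof is more delicate. A natural alternative, more in line with the section's tools, would be to slice $p$ through a rank-$1$ intermediary coming from $p|_\gamma$, apply Corollary~\ref{strong44rank1} to that slice using $\alpha_0$ to identify the relevant intersection of algebraic closures, and then patch the resulting Moishezon conclusions together via Proposition~\ref{firstprop}(e)--(g).
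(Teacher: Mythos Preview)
Your argument has a genuine gap, but not where you suspect. Step~3 is fine: almost $\P$-internality of a stationary type is indeed preserved under arbitrary (even forking) extensions of the base, by conjugating the witnessing parameters over $\gamma$ to be independent from $\alpha\delta$. The failure is in step~4. The ``standard pairing principle'' you invoke is the statement that if $\stp(a/B)$ and $\stp(a'/B)$ are almost $\P$-internal then so is $\stp(aa'/B)$, which is true but not what you need. What you actually use is a \emph{tower} statement: from $\stp(\gamma/\delta)$ and $\stp(\alpha/\gamma\delta)$ almost $\P$-internal, conclude $\stp(\alpha/\delta)$ almost $\P$-internal. This is transitivity of almost $\P$-internality, and it is false in general. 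In $\ccm$, take a Hopf surface $X$ of algebraic dimension one with its elliptic fibration $f:X\to\PP^1$, let $a$ be generic in $X$ and $b=f(a)$: then $\stp(b)$ and $\stp(a/b)$ are both $\PP$-internal, yet $\stp(a)$ is not, since $X$ is not Moishezon.

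A further warning sign is that your use of the diagonal section in step~2 is redundant: since $\gamma=\cb(p)\subseteq\acl(\beta)$, almost $\P$-internality of $\stp(\beta/\delta)$ already gives almost $\P$-internality of $\stp(\gamma/\delta)$ directly, without passing through $\alpha_0$ or Theorem~\ref{cbp-ucbp}. So your argument, as written, would prove that \emph{every} almost $\P$-internal stationary type is $\P$-Moishezon over $\emptyset$, with no use of either the diagonal section or the rank-at-least-two hypothesis. The Hopf surface example shows this is false. Your closing alternative sketch is much closer to the paper's actual route: one passes to the canonical restriction $\tp(\alpha_0/\gamma)$, uses the rank $\geq 2$ hypothesis to produce a rank-one slice $c'\supseteq\gamma$ with $\acl(\gamma\alpha_0)\cap\acl(c')=\acl(\gamma)$ (Lemma~5.4 of~\cite{cbp}), applies Corollary~\ref{strong44rank1} to that slice, and then observes that $p$ extends $\tp(\alpha_0/\gamma)|\gamma\alpha_0$, invoking Proposition~\ref{firstprop}(d). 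The point is that $\P$-Moishezon-ness, unlike almost $\P$-internality, \emph{is} transitive (Proposition~\ref{firstprop}(g)), so once the pieces are Moishezon the composition goes through.
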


\begin{proof}
We begin by proving the ``in particular" clause as a separate lemma.

\begin{lemma}
\label{strongpillaymoosa}
Suppose $T$ is a finite $U$-rank theory with the CBP.
Suppose $c=\cb(a/Ac)$ and $\stp(a/Ac)$ is of $U$-rank at least two and almost internal to a nonmodular minimal type.
Then $\stp(a/Ac)|Aca$ is $\P$-Moishezon over $A$.
\end{lemma}

\begin{proof}[Proof of Lemma~\ref{strongpillaymoosa}]
Replacing $a$ by $(a,c)$ we may also assume that $a\in\acl(Ac)$.
Since $\stp(a/Ac)$ is almost internal to a nonmodular minimal type, there exists $c'\supseteq c$ such that
$\stp(a/Ac')$ is a stationary canonical type of $U$-rank $1$ and $\acl(Aca)\cap\acl(Ac')=\acl(Ac)$.
This is Lemma~5.4 of~\cite{cbp} and expresses the existence of a ``rich family of curves'' on any $U$-rank at least two type that is almost internal to a nonmodular minimal type.
Note that $c'\notin\acl(Aa)$:
indeed, if it were, then we would have $c'\in\acl(Ac)$, contradicting the fact that $U(a/Ac)>1$ while $U(a/Ac')=1$.
Now, by Corollary~\ref{strong44rank1} applied to $\stp(a/Ac')$,
 we have that $\stp\big(a/\acl(Aa)\cap\acl(Ac')\big)|Aa$ is $\P$-Moishezon over $A$.
But $\acl(Aa)\cap\acl(Ac')=\acl(Aca)\cap\acl(Ac')=\acl(Ac)$.
Hence $\stp(a/Ac)|Aa$ is $\P$-Moishezon over $A$, as desired.
\end{proof}

Now suppose $p$ is a stationary type almost internal to a nonmodular minimal type and admitting a diagonal section.
Let $c=\cb(p)$.
Recall that $p$ admitting a diagonal section means that there exists $a\in \acl\big(\dom(p)\big)$ such that $\tp(a/c)=p\upharpoonright_c$.
Note that $p$, or rather its unique extension to $\acl\big(\dom(p)\big)$, is an extension of $\tp(a/c)|ca$.
Hence it suffices to show that $\tp(a/c)|ca$ is $\P$-Moishezon over $\emptyset$.
But since $p$ is a nonforking extension of $\tp(a/c)$, the latter is also almost internal to a nonmodular minimal type (the same one).
So we can apply Lemma~\ref{strongpillaymoosa} with $A=\emptyset$ to conclude that $\tp(a/c)|ca$ is $\P$-Moishezon over $\emptyset$, as desired.
\end{proof}

Note that Theorem~\ref{moishcrit} only abstracts a special case of the ``in particular'' clause of Campana's algebraicity criterion (Theorem~\ref{1algcrit}), and that it is not even clear how to formulate a more complete model-theortic counterpart of that result.
It may be that the Zariski-type structures of Hrushovski-Zilber, where it is possible to talk about {\em specialisations}, offer the right context in which to work this out.

\section{Coreductions}
\label{section-cored}

\noindent
As was mentioned in the introduction, the original intention for developing a notion of Moishezon-ness in  the abstract stable setting was to prove that under certain assumptions on the theory (namely UCBP),
for all $a$ and $A$ there exists a minimum algebraically closed set $A\subseteq B\subseteq\acl(Aa)$ such that $\stp(a/B)$ is almost internal to the set of nomodular minimal types.
As it turned out, Chatzidakis~\cite{chatzidakis10}, employing even weaker assumptions (simply CBP), found a much more direct proof of this intended application.\footnote{She then used this result to prove Theorem~\ref{cbp-ucbp} above; that the CBP implies the UCBP.}
In this final section we discuss Chatzidakis' result from the point of view of known results in complex geometry.
Even though it was not in the end necessary, we also explain how Moishezon-ness is related to these issues.

Given an irreducible compact complex space $X$, one of the first constructions in bimeromorphic geometry is to consoider its {\em algebraic reduction}, a holomorphic surjection $f:X\to V$ where $V$ is a projective algebraic variety and such that $f$ induces an isomorphism between the rational function field of $V$ and the meromorphic function field of $X$.
The algebraic reduction map satisfies a certain universal property expressing that $V$ is the maximum algebraic image of $X$.
The model-theoretic content of this is expressed in the following easy fact about finite rank types in arbitrary stable theories:
Suppose $\P$ is any $\aut_A(\overline M)$-invariant set of partial types and $U(a/A)<\omega$. Then there exists $b\in\acl(Aa)$ such that $\stp(b/A)$ is almost $\P$-internal and whenever $c\in\acl(Aa)$ with $\stp(c/A)$ almost $\P$-internal, then $c\in\acl(Ab)$.
Clearly this $b$ is uniquely determined upto interalgebraicity over $A$, and we could call it the {\em $\P$-reduction of $a$ over $A$}.
Note that finding the $\P$-reduction is the first step in a $\P$-analysis of $\stp(a/A)$.
Up to bimeromorphic equivalence, algebraic reductions in complex geometry are just $\mathbb P$-reductions in the theory $\ccm$.

Now, in~\cite{campana81}, Campana considered a dual notion to algebraic reductions: maximum algebraic fibrations (or quotients).
Somewhat loosely speaking, a meromorphic surjection $f:X\to Y$ is an {\em algebraic coreduction of $X$} if the general fibres of $f$ are Moishezon and if every compact complex-analytic family of Moishezon subspaces of $X$ is finer than the fibration induced by $f$.
Campana proved that algebraic coreductions of K\"ahler-type spaces exist.
Campana's proof used (and it seems inspired) his ``first algebraicity criterion''.
This is what lead us to study Moishezon types and to prove Theorem~\ref{moishcrit} above.
In any case, the model-theoretic content of algebraic coreductions is captured by the following definition:

\begin{definition}
\label{coreddef}
Suppose $\P$ is a set of $\aut_A(\overline M)$-invariant partial types.
A {\em $\P$-coreduction of $a$ over $A$} is a tuple $b\in \acl(Aa)$ such that $\stp(a/Ab)$ is almost $\P$-internal and whenever $c\in\acl(Aa)$ with $\stp(a/Ac)$ almost $\P$-internal, then $b\in\acl(Ac)$.
\end{definition}

But do coreductions exist?
Unlike reductions, they are not an immediate consequence of finite rank.
Nevertheless,
Chatzidakis proves -- this is Theorem~2.8 of~\cite{chatzidakis10} -- that {\em if $T$ has the CBP, $\P$ is an $\aut_A(\overline M)$-invariant set of minimal types, and $U(a/A)<\omega$, then a $\P$-coreduction of $a$ over $A$ exists.}
In particular this leads to an attractive transfer of results from complex geometry to differential-algebraic geometry: finite rank differential-algebraic varieties admit maximum fibrations by subvarieties that are almost internal to the field of constants.
This is a new and potentially useful tool in the study of finite-rank differential algebraic varieties.

In order to prove the existence of coreductions for finite rank types it suffices (and is necessary)  to show that if $b_1,b_2\in \acl(Aa)$ are such that $\stp(a/Ab_1)$ and $\stp(a/Ab_2)$ are almost $\P$-internal, then $\stp\big(a/\acl(Ab_1)\cap\acl(Ab_2)\big)$ is almost $\P$-internal.
In the following proposition we prove a weaker version of this fact, but unconditionally (without the CBP).
It serves as a good illustration of the power of Moishezon-ness over internality.

\begin{proposition}
\label{tocored}
We do not assume the CBP.
Suppose $\P$ is any $\aut_A(\overline M)$-invariant set of types and $U(a/A)<\omega$.
If $b_1,b_2\in \acl(Aa)$ are such that $\stp(a/Ab_i)|Aa$ is $\P$-Moishezon over $\acl(Ab_1)\cap\acl(Ab_2)$ for $i=1,2$, then $\tp\big(a/\acl(Ab_1)\cap\acl(Ab_2)\big)$ is almost $\bf P$-internal.
\end{proposition}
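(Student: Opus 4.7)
The plan is to set $Ad := \acl(Ab_1)\cap\acl(Ab_2)$ and to produce a realization of the nonforking extension $\tp(a/Ad)|Aa$ that is almost $\P$-internal over $Aa$; this will force $\tp(a/Ad)$ itself to be almost $\P$-internal by the standard fact that almost $\P$-internality descends from a nonforking extension to the type.

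To produce such a realization, I would build an alternating chain $a = a_0, a_1, a_2, \ldots$ of $Ad$-conjugates of $a$, patterned after the chain in the proof of Lemma~\ref{strong44}. For each $i$, fix an automorphism over $Ad$ sending $a$ to $a_i$, let $b_1^i, b_2^i \in \acl(Aa_i)$ be the images of $b_1, b_2$, alternate $j_i \in \{1,2\}$, and set $a_{i+1} \models \stp(a_i/Ab_{j_i}^i)\,|\,Aa_0a_1\cdots a_i$. By $\aut_A(\overline M)$-invariance of $\P$, the type $\stp(a_i/Ab_{j_i}^i)|Aa_i$ will be $\P$-Moishezon over $Ad$ and $a_{i+1}$ will realize it. Applying the Moishezon property with $c := a_0\cdots a_i$: the type $\stp(b_{j_i}^i a_i/Ad\cdot c)$ is algebraic (since $b_{j_i}^i \in \acl(Aa_i)$), hence trivially almost $\P$-internal, so the Moishezon hypothesis yields that $\stp(a_{i+1}/Aa_0\cdots a_i)$ is almost $\P$-internal. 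Iterating this with the transitivity of almost $\P$-internality should give that $\stp(a_1a_2\cdots a_\ell/Aa_0)$ is almost $\P$-internal for every $\ell$.

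The remaining ingredient will be the $(*)$-statement invoked in the proof of Lemma~\ref{strong44} (from Lemma~2.2 of~\cite{cbp}): by finite $U$-rank, for some sufficiently large $\ell$ one has $a_\ell \ind_{Ad} a_0$, so that $a_\ell$ realizes $\tp(a/Ad)|Aa_0$. Since $\stp(a_\ell/Aa_0)$ is a subtype of the almost internal $\stp(a_1\cdots a_\ell/Aa_0)$, it will be almost $\P$-internal, giving almost $\P$-internality of $\tp(a/Ad)|Aa$ and hence of $\tp(a/Ad)$. The step I anticipate to be the main obstacle is verifying that the alternating pattern is precisely what forces this eventual $Ad$-independence through the $(*)$-statement: the equality $Ad = \acl(Ab_1)\cap\acl(Ab_2)$ is crucial here, since a one-sided chain using only $b_1$-moves, say, would always share $b_1$ between $a_0$ and $a_\ell$ and never yield $Ad$-independence when $b_1\notin\acl(Ad)$, so alternating between the two fibrations is essential for the information about $a_0$ carried by $a_\ell$ to dissipate into $Ad$ alone.
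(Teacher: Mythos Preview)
There is a genuine gap in your argument, and it lies precisely where you expected the Moishezon hypothesis to do its work. Your application of Moishezon-ness with $c = a_0\cdots a_i$ is trivial: since $a_i$ already lies in $c$, the hypothesis ``$\stp(a_i/Ad\, c)$ is almost $\P$-internal'' is vacuous, and the conclusion you draw --- that $\stp(a_{i+1}/Aa_0\cdots a_i)$ is almost $\P$-internal --- follows from almost $\P$-internality of $\stp(a/Ab_{j_i})$ alone. So your argument never uses Moishezon-ness beyond almost internality. The problem surfaces at the next line: you then invoke ``transitivity of almost $\P$-internality'' to conclude that $\stp(a_1\cdots a_\ell/Aa_0)$ is almost $\P$-internal. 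But almost $\P$-internality is \emph{not} closed under towers: from $\stp(c/B)$ and $\stp(d/Bc)$ almost $\P$-internal one only gets that $\stp(cd/B)$ is $\P$-\emph{analyzable} in two steps, not almost $\P$-internal. (Any finite-rank type that is $\P$-analyzable but not almost $\P$-internal gives a counterexample.) Thus your chain yields only that $\stp(a_\ell/Aa_0)$ is $\P$-analyzable, which is not enough. Indeed, if your argument went through it would prove the existence of $\P$-coreductions with no CBP assumption whatsoever --- precisely the statement the paper is at pains to point out is not available for free.

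The paper's proof avoids this by anchoring the chain over the fixed element $b=b_1$ rather than over $a_0$: at each step the Moishezon definition is applied with $c=b$, using the inductive hypothesis that $\stp(a_i/Ab)$ is almost $\P$-internal, to conclude directly that $\stp(a_{i+1}/Ab)$ is almost $\P$-internal. This is a genuine, non-trivial use of Moishezon-ness at every step, and no tower argument is needed. The independence obtained is then $b\ind_A a_{\ell+1}$ (not $a_0\ind_{Ad} a_\ell$), via a direct $U$-rank monotonicity computation rather than by citing the $(*)$-statement of Lemma~2.2 of~\cite{cbp}; that statement is for the single-fibration setting of Lemma~\ref{strong44} and does not apply to your two-fibration alternation as written.
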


\begin{remark}
Note that in the case when the CBP holds and $\P$ is the set of all nonmodular minimals, the hypothesis that $\tp(a/Ab_i)|Aa$ is $\P$-Moishezon over $\acl(Ab_1)\cap\acl(Ab_2)$ would follow by Theorem~\ref{moishcrit} from $\tp(a/Ab_i)$ being simply almost internal to some nonmodular minimal type.
But this does not give a new proof of the existence of coreductions under CBP because Theorem~\ref{moishcrit} itself relied on the existence of coreductions (to get UCBP from CBP).
\end{remark}

\begin{proof}[Proof of Proposition~\ref{tocored}]
For the sake of notational convenience, let us rename $b:=b_1$ and $c:=b_2$.
Passing to algebraic closures we assume that $\tp(a/Ab)$ and $\tp(a/Ac)$ are stationary.
Replacing $A$ by $\acl(Ab)\cap\acl(Ac)$, we also assume that $A=\acl(Ab)\cap\acl(Ac)$.
We wish to show that $\tp(a/A)$ is almost $\P$-internal.

Recursively define $a=a_0, a_1,\dots$, $b=b_0, b_1,\dots$, and $c=c_0, c_1,\dots$ so that
\begin{itemize}
\item[(i)]
$a_ib_ic_i\models\tp(abc/A)$ for all $i$,
\item[(ii)]
for $i$ even, $a_{i+1}\models\tp(a_i/Ac_i)|Ac_ib$ and $c_{i+1}=c_i$,
\item[(iii)]
for $i$ odd, $a_{i+1}\models\tp(a_i/Ab_i)|Ab_ib$ and $b_{i+1}=b_i$, and
\item[(iv)]
$\stp(a_i/Ab)$ is almost $\P$-internal for all $i$.
\end{itemize}
This is done as follows.
Suppose that for even $i$ we have defined $a=a_0, a_1,\dots, a_i$, $b=b_0, b_1,\dots,b_i$, and $c=c_0, c_1,\dots,c_i$ as desired.
We first show how to define $a_{i+1},b_{i+1},c_{i+1}$.
Let $a_{i+1}\models\tp(a_i/Ac_i)|Aa_ib$.
(Note that $c_i\in\acl(Aa_i)$.)
Since $\tp(a/Ac)|Aa$ is $\P$-Moishezon over $A$ by assumption, we get that $\tp(a_{i+1}/Aa_i)=\tp(a_i/Ac_i)|Aa_i$ is $\P$-Moishezon over $A$.
By~(iv) then, $\stp(a_{i+1}/Ab)$ is almost $\P$-internal.
Set $c_{i+1}:=c_i$.
Since $a_{i+1}c_{i+1}\models\tp(a_ic_i/A)=\tp(ac/A)$, there exists $b_{i+1}$ such that $a_{i+1}b_{i+1}c_{i+1}\models\tp(abc/A)$.

We now construct $a_{i+2}b_{i+2}c_{i+2}$ analogously.
Let $a_{i+2}\models\tp(a_{i+1}/Ab_{i+1})Aa_{i+1}b$.
Since $\tp(a/Ab)|Aa$ is $\P$-Moishezon over $A$,
$\tp(a_{i+2}/Aa_{i+1})=\tp(a_{i+1}/Ab_{i+1})|Aa_{i+1}$ is $\P$-Moishezon over $A$.
But we have just shown that $\stp(a_{i+1}/Ab)$ is almost $\P$-internal.
So $\stp(a_{i+2}/Ab)$ is almost $\P$-internal.
Setting $b_{i+2}:=b_{i+1}$ we can find $c_{i+2}$ such that $a_{i+2}b_{i+2}c_{i+2}\models\tp(abc/A)$.
This completes the recursive construction.

We claim that for all $i$, $U(a_{i+1}/Ab)\geq U(a_i/Ab)$.
Indeed, for $i$ even,
\begin{eqnarray*}
U(a_{i+1}/Ab)
&=&
U(a_{i+1}c_{i+1}/Ab) \ \ \ \text{ as $c_{i+1}\in\acl(Aa_{i+1})$}\\
&=&
U(a_{i+1}c_i/Ab) \ \ \ \text{ as $c_{i+1}=c_i$}\\
&=&
U(a_{i+1}/Abc_i)+U(c_i/Ab)\\
&=&
U(a_{i+1}/Ac_i)+U(c_i/Ab) \ \ \ \text{ as $a_{i+1}\ind_{Ac_i}b$}\\
&=&
U(a_i/Ac_i)+U(c_i/Ab) \ \ \ \text{ as $a_{i+1}\models\tp(a_i/Ac_i)$}\\
&\geq&
U(a_i/Abc_i)+U(c_i/Ab)\\
&=&
U(a_ic_i/Ab)\\
&=&
U(a_i/Ab)
\end{eqnarray*}
The argument for $i$ odd is analogous.

Hence, eventually for some $\ell>0$, $U(a_j/Ab)=U(a_\ell/Ab)$ for all $j\geq \ell$.
Let us take $\ell$ to be even.
Now,
\begin{eqnarray*}
U(a_{\ell+1}b_{\ell+1}/Ab)
&=&
U(a_{\ell+1}/Ab) \ \ \ \text{ as $b_{\ell+1}\in\acl(Aa_{\ell+1})$}\\
&=&
U(a_{\ell+2}/Ab) \ \ \ \text{ by choice of $\ell$}\\
&=&
U(a_{\ell+2}b_{\ell+1}/Ab) \ \ \ \text{ as $b_{\ell+1}=b_{\ell+2}\in\acl(Aa_{\ell+2})$}\\
&=&
U(a_{\ell+2}/Abb_{\ell+1})+U(b_{\ell+1}/Ab)\\
&=&
U(a_{\ell+2}/Ab_{\ell+1})+U(b_{\ell+1}/Ab)\ \ \ \text{ as $a_{\ell+2}\ind_{Ab_{\ell+1}}b$}\\
&=&
U(a_{\ell+1}/Ab_{\ell+1})+U(b_{\ell+1}/Ab)\ \ \ \text{ as $a_{\ell+2}\models\tp(a_{\ell+1}/Ab_{\ell+1})$.}
\end{eqnarray*}
On the other hand,
$U(a_{\ell+1}b_{\ell+1}/Ab)=U(a_{\ell+1}/Abb_{\ell+1})+U(b_{\ell+1}/Ab)$.
It follows that $U(a_{\ell+1}/Abb_{\ell+1})=U(a_{\ell+1}/Ab_{\ell+1})$, which means that $\displaystyle b\ind_{Ab_{\ell+1}}a_{\ell+1}$.
But $\displaystyle b\ind_{Ac_{\ell+1}}a_{\ell+1}$ by~(ii).
Hence, $\cb(b/Aa_{\ell+1})\subseteq\acl(Ab_{\ell+1})\cap\acl(Ac_{\ell+1})=A$ and we have that $\displaystyle b\ind_Aa_{\ell+1}$.
So $\tp(a/A)=\tp(a_{\ell+1}/A)$ has a nonforking extension -- namely $\stp(a_{\ell+1}/Ab)$-- that is almost $\P$-internal.
Hence $\tp(a/A)$ is itself almost $\P$-internal.
\end{proof}

\subsection{Generating fibrations}
In conclusion, let us relate some of these ideas to the notion of ``generating families'' from~\cite{cbp}.
By a {\em fibration} of a stationary type $p(x)=\tp(a/A)$ we mean simply a stationary type $q(x,y)=\tp(a,b/A)$ such that $s(y)=\tp(b/A)$ and $q_b:=\tp(a/Ab)$ are also stationary.
The {\em fibres} of this fibration are the types $q_{b'}:=q(x,b')$ where $b'\models s(y)$.
Given another realisation $a'\models p(x)$, we say that $a$ and $a'$ are {\em $q$-connected} if there exists a finite sequence $a=a_0,a_1,\dots, a_\ell=a'$ of realisations of $p(x)$ and a sequence $b=b_0,b_1,\dots,b_{\ell-1}$ of realisations of $s(y)$ such that $a_i$ and $a_{i+1}$ both realise $q_{b_i}(x)$ for all $i=0,\dots,\ell-1$.
We say that {\em $q(x,y)$ generates $p(x)$} if every (equivalently some) $A$-independent pair of realisations of $p(x)$ are $q$-connected.
It turns out that in the finite $U$-rank context $q$ generates $p$ if and only if $\acl(Aa)\cap\acl(Ab)=\acl(A)$ (see Lemma~2.2 of~\cite{cbp}).
The author and Pillay obtained the following criterion for internality to a nomodular minimal type (Theorem~1.3(b) of~\cite{cbp}):
If $T$ has the CBP and a finite rank type $p(x)$ is generated by a fibration whose fibres are almost internal to a nonmodular minimal type, then $p(x)$ is itself almost internal to that type.

We can extend these notions to pairs (or indeed finite collections) of fibrations.
Let us say that {\em $p(x)$ is generated by the pair of fibrations $q_1(x,y)=(a,b/A)$ and $q_2(x,z)=\tp(a,c/A)$} if for some (equivalently any) pair of $A$-independent realisations of $p(x)$, $a$ and $a'$, there exists a finite sequence of realisations of $p(x)$, $a=a_0,a_1,\dots,a_\ell=a'$ such that each pair $a_i,a_{i+1}$ is either $q_1$-connected or $q_2$-connected.
Chatzidakis' proof of the existence of coreductions more or less gives the following internality criterion:

\begin{theorem}
Suppose $T$ has the CBP, $p(x)\in S(A)$ is a stationary finite rank type, and $\P$ is any set of $\aut_A(\overline M)$-invariant minimal types.
If $p$ is generated by a pair of fibration whose fibres are almost $\P$-internal, then $p$ is itself almost $\P$-internal.
\end{theorem}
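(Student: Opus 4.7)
The plan is to generalize the zigzag construction of Proposition~\ref{tocored} to our setting, where the fibration parameters $b$ and $c$ are no longer assumed to lie in $\acl(Aa)$ and where the walks provided by pair-generation play the role of the intersection hypothesis $A = \acl(Ab) \cap \acl(Ac)$ from that proposition.

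The first step is to invoke the CBP to upgrade the almost $\P$-internality of the fibers to $\P$-Moishezon-ness. Appealing to Theorem~\ref{moishcrit} for fibers of $U$-rank at least two (with an ad hoc treatment of lower-rank fibers via Corollary~\ref{strong44rank1} or direct arguments), whenever $(a_i, d_i) \models q_j$ and $a_{i+1} \models \tp(a_i/Ad_i) \mid Ad_i a_i$, the type $\tp(a_{i+1}/Ad_i a_i)$ is $\P$-Moishezon over $A$.

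For the main argument, I take $A$-independent realizations $a, a' \models p$, use the pair-generation to produce a walk $a = a_0, a_1, \dots, a_\ell = a'$ whose consecutive pairs are $q_1$- or $q_2$-connected by parameters $d_i$, and show by induction on $i$ that $\stp(a_i/Aa)$ is almost $\P$-internal. In the inductive step, the hypothesis on $a_i$ gives (since extensions preserve almost $\P$-internality) that $\stp(a_i/Aad_i)$ is almost $\P$-internal; applying the Moishezon property of $\tp(a_{i+1}/Ad_i a_i)$ over $A$ with $a$ as the extra parameter then yields $\stp(a_{i+1}/Aad_i)$ almost $\P$-internal, which descends to $\stp(a_{i+1}/Aa)$ almost $\P$-internal provided the walk is chosen so that $a_{i+1}\ind_{Aa} d_i$. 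At $i = \ell$, since $a' \ind_A a$, the type $\stp(a'/Aa)$ is the nonforking extension of $p$, yielding $p$ itself almost $\P$-internal.

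The main obstacle I anticipate is arranging the independence $a_{i+1} \ind_{Aa} d_i$ along the walk: a naive generic extension gives only $a_{i+1} \ind_{Ad_i} a$, which is not the same condition. I expect the resolution to mirror Proposition~\ref{tocored} structurally: rather than trying to connect prescribed endpoints, one builds an auxiliary alternating sequence based at $a$, exploits finite $U$-rank to force $U(a_i/Aa)$ to stabilize, and at stabilization extracts a containment of the form $\cb(a/Aa_i) \subseteq \acl(Ab_i) \cap \acl(Ac_i)$, which the pair-generation hypothesis will pin to $\acl(A)$, producing the desired independent almost $\P$-internal realization of $p$. The novelty over Proposition~\ref{tocored} is the alternation between the two fibration types and the replacement of the algebraic-intersection hypothesis with the weaker walk-generation condition.
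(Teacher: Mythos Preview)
Your plan differs from the paper's and has a genuine gap.

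The paper does not use the Moishezon machinery here. It directly invokes Chatzidakis' existence of $\P$-coreductions (Theorem~2.8 of~\cite{chatzidakis10}, valid for \emph{any} $\aut_A(\overline M)$-invariant set $\P$ of minimal types): since $\tp(a/Aa)$, $\tp(a/Ab)$, $\tp(a/Ac)$ are each almost $\P$-internal, $\tp\big(a/\acl(Aa)\cap\acl(Ab)\cap\acl(Ac)\big)$ is almost $\P$-internal. A short walk argument (as in Lemma~2.2 of~\cite{cbp}) then shows that pair-generation forces this triple intersection to equal $\acl(A)$, and the result follows.

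The gap in your plan is the Moishezon upgrade in the first step. Theorem~\ref{moishcrit}, Lemma~\ref{strongpillaymoosa}, and Corollary~\ref{strong44rank1} are all stated for the specific $\P$ consisting of \emph{all nonmodular} minimal types, and the first two require the fibre to be almost internal to a \emph{single nonmodular} minimal type; the proof of Lemma~\ref{strongpillaymoosa} depends on the ``rich family of curves'' (Lemma~5.4 of~\cite{cbp}), a consequence of nonmodularity with no analogue in the modular case. The theorem you are proving, however, allows $\P$ to be an arbitrary invariant set of minimal types, possibly entirely modular. For such $\P$ there is no available mechanism to pass from ``the fibre $\tp(a/Ad)$ is almost $\P$-internal'' to ``$\tp(a/Ad)|Ada$ is $\P$-Moishezon over $A$'', and your zigzag-plus-stabilisation scheme cannot get started.

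Even restricted to nonmodular $\P$, the route is circuitous: Theorem~\ref{moishcrit} rests on Theorem~\ref{cbp-ucbp}, whose proof in~\cite{chatzidakis10} already passes through the existence of coreductions, so you would be invoking coreductions indirectly to drive an argument whose conclusion the paper obtains from coreductions in one line.
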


\begin{proof}[Sketch of proof]
Suppose $p(x)=\tp(a/A)$ is generated by the fibrations $q_1(x,y)=\tp(a,b/A)$ and $q_2(x,z)=\tp(a,c/A)$.
Since $\tp(a/Aa), \tp(a/Ab)$, and $\tp(a/Ac)$ are all almost $\P$-internal, the existence of $\P$-coreductions (Theorem~2.8 of~\cite{chatzidakis10}) implies that $\tp\big(a/\acl(Aa)\cap\acl(Ab)\cap\acl(Ac)\big)$ is almost $\P$-internal.
But the fact that $q_1$ and $q_2$ generate $p(x)$ implies that
$$\acl(Aa)\cap\acl(Ab)\cap\acl(Ac)=A.$$
Indeed, arguing as in Lemma~2.2 of~\cite{cbp}, it is not hard to show that there exists a finite sequence $abc=a_0b_0c_0,a_1b_1c_1,\dots,a_nb_nc_n$ such that $\stp(a_ib_ic_i/A)=\stp(a_{i+1}b_{i+1}c_{i+1}/A)$ and either $b_i=b_{i+1}$ or $c_i=c_{i+1}$, for all $i=0,\dots,n-1$, and $a_n$ is independent of $a$ over $A$.
Hence $\acl(Aa)\cap\acl(Ab)\cap\acl(Ac)=\acl(Aa_n)\cap\acl(Ab_n)\cap\acl(Ac_n)$, and so $\acl(Aa)\cap\acl(Ab)\cap\acl(Ac)\subseteq \acl(Aa)\cap\acl(Aa_n)=A$.
Thus, $p(x)=\tp(a/A)$ is almost $\P$-internal.
\end{proof}

Proposition~\ref{tocored} can also be understood in these terms.
Loosely speaking, it says that if a finite rank stationary type over $A$ is generated by a pair of fibrations whose fibres satisfy a certain strengthening of almost $\P$-internality, where $\P$ here is any $\aut_A(\overline M)$-invariant set of partial types, then the type itself is almost $\P$-internal.
The precise strengthening of internality that we require of the fibres is that a nonforking extension to a realisation is $\P$-Moishezon over $A$.


\end{document}